\newcommand{\Aut}{\operatorname{Aut}}
\renewcommand{\Vert}{\operatorname{V}}
\newcommand{\Edge}{\operatorname{E}}
\renewcommand{\star}{\operatorname{star}}
\newcommand{\Alpha}{\text{\greektext A}}
\newcommand{\Beta}{\text{\greektext B}}
\renewcommand{\Gamma}{\text{\greektext G}}
\renewcommand{\Lambda}{\text{\greektext L}}
\renewcommand{\Phi}{\text{\greektext F}}
\renewcommand{\Xi}{\text{\greektext X}}
\newcommand{\head}{\partial_1}
\newcommand{\tail}{\partial_0}
\newcommand{\Z}{\mathbb Z}
\newtheorem{theorem}{Theorem}[section]
\newtheorem*{theorem*}{Theorem}
\newtheorem{lemma}[theorem]{Lemma}
\theoremstyle{definition}
\newtheorem{definition}[theorem]{Definition}
\newtheorem{example}[theorem]{Example}
\newtheorem{remark}[theorem]{Remark}
\begin{document}
\title{On Leighton's graph covering theorem}
\author{Walter D. Neumann}
\begin{abstract}We give 
  short expositions of both Leighton's proof and the Bass-Kulkarni
  proof of Leighton's graph covering theorem, in the context of
  colored graphs. We discuss a further generalization, needed
  elsewhere, to ``symmetry-restricted graphs.''  We can prove it in
  some cases, for example, if the ``graph of colors'' is a tree, but
  we do not know if it is true in general. We show that Bass's
  Conjugation Theorem, which is a tool in the Bass-Kulkarni approach,
  does hold in the symmetry-restricted context.
\end{abstract}
\maketitle
Leighton's graph covering theorem says:
\begin{theorem*}[Leighton \cite{leighton}]
Two finite graphs
  which have a common covering have a common finite covering.
\end{theorem*}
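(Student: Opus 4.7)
The plan is to pass to the universal cover of the given common cover---which is a tree---and then build a finite common cover by a combinatorial pairing argument.

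First I would replace the given common cover $\tilde G$ of $G_1$ and $G_2$ by its universal cover $T$, which is a tree and still covers both $G_i$. Writing $p_i : T \to G_i$ for the resulting covering maps and $\Gamma_i \le \Aut(T)$ for their deck groups, we have $G_i = \Gamma_i \backslash T$ with $\Gamma_i$ acting freely and cocompactly. The theorem then becomes the assertion that two such uniform tree lattices $\Gamma_1, \Gamma_2 \le \Aut(T)$ admit a common finite-index overgroup $\Gamma$ (possibly after replacing one of them by a conjugate), so that $\Gamma \backslash T$ is the required finite common cover.

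Next I would try to construct this cover directly in Leighton's style. Each $v \in \Vert(T)$ carries a pair of vertex labels $(p_1(v), p_2(v)) \in \Vert(G_1) \times \Vert(G_2)$ and, since covering maps are bijective on stars, also a bijection between the half-edges at $p_1(v)$ and those at $p_2(v)$. I would take $\Vert(G)$ to be the finite set of all triples $(v_1, v_2, \sigma)$ with $v_i \in \Vert(G_i)$ and $\sigma$ a compatible bijection between the half-edges at $v_1$ and those at $v_2$, and declare the edges of $G$ to be determined by pairing half-edges through $\sigma$ at each endpoint. The two coordinate projections $G \to G_1$ and $G \to G_2$ would then be the candidate coverings.

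The main obstacle is verifying that these projections are genuine coverings, i.e., that the star of each vertex $(v_1, v_2, \sigma) \in \Vert(G)$ maps bijectively onto the star of $v_i$ in $G_i$. This reduces to showing that enough compatible $\sigma$'s exist at every vertex, a counting/transitivity statement that is the combinatorial heart of Leighton's argument. In the Bass--Kulkarni reformulation the same difficulty reappears as Bass's Conjugation Theorem for uniform tree lattices, which the abstract flags as the essential tool, and I expect this is the step where any colored or symmetry-restricted generalization will stand or fall.
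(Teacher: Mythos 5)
Your opening move---pass to the universal cover $T$ of the given common cover, note it is a tree, and then build a finite common cover of $G_1$ and $G_2$ by a combinatorial pairing indexed over $\Vert(G_1)\times\Vert(G_2)$---is the same skeleton as Leighton's and the paper's. But the concrete construction you propose has a genuine gap at exactly the step you flag, and you misdiagnose the failure. Take a vertex $(v_1,v_2,\sigma)$ of your $G$ and a half-edge $e_1\in\star(v_1)$; its partner $e_2=\sigma(e_1)$ is determined, so the lifted edge should run to some vertex $(\head e_1,\head e_2,\tau)$. But $\sigma$ gives no canonical $\tau$: \emph{every} color-preserving bijection $\tau\colon\star(\head e_1)\to\star(\head e_2)$ with $\tau(\bar e_1)=\bar e_2$ is ``compatible,'' and there are many of them. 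So the coordinate projection to $G_1$ fails to be locally injective over edges---the obstruction is not that ``enough compatible $\sigma$'s exist'' (they always do) but that there are far too many, with no rule to pick one. Recording the full star-bijection at each vertex is too much information.

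Leighton's construction (Section~\ref{sec:leighton} of the paper) fixes exactly this by tracking much less. For each edge color $k$ one fixes a \emph{group} $\Pi_k$ of order $r_k$ and, at each vertex $v$, a labeling $\psi_{vk}$ of the $k$--edges at $v$ by $\Pi_k$; the only information extracted from a pair $(e,e')$ is the single ``discrepancy'' element $\psi_{vk}(e)\psi_{v'k}(e')^{-1}\in\Pi_k$. One then adjoins multiplier sets $\Alpha_i$ of size $a_i=s/n_i$ and $\Beta_k$ of size $b_k=s/m_k$ (with $s$ a common multiple of the $m_k$), together with bijections $\phi_k\colon\Pi_k\times\Beta_k\to\Alpha_{\tail k}$; this is where the numerical identity $a_i=r_k b_k$ enters, ensuring $|\Pi_k\times\Beta_k|=|\Alpha_i|$. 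A vertex of the common cover $H$ is $(i,v,v',\alpha)$ with $\alpha\in\Alpha_i$, and the tail of an edge $(k,e,e',\beta)$ has $\alpha$--coordinate $\phi_k\bigl(\psi_{vk}(e)\psi_{v'k}(e')^{-1},\beta\bigr)$. Because $\phi_k$ is a bijection and $\Pi_k$ is a group, this equation determines $(e',\beta)$ uniquely from $(v,\alpha,e)$---precisely the local uniqueness your construction lacks. Finally, note that the paper's direct proof never invokes Bass's Conjugation Theorem; that enters only in the alternative Bass--Kulkarni route of Section~\ref{sec:bk}, and even there it is replaced by a fattening argument. The obstruction you ran into is resolved by Leighton's arithmetic bookkeeping, not by conjugation.
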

It answered a conjecture of Angluin and Gardiner who had proved the
case that both graphs are $k$--regular \cite{reg}.  Leighton's proof
is short (two pages), but has been considered by some to lack
transparency. It was reframed in terms of Bass-Serre theory by Bass
and Kulkarni \cite{BK, B}, expanding its length considerably
but providing group-theoretic tools which have other uses.

The general philosophy of the Bass-Kulkarni proof is that adding more
structure helps. Let us illustrate this by giving a very short
proof of Angluin and Gardiner's original $k$--regular case.

We assume all graphs considered are connected. ``Graph'' will thus
mean a connected $1$--complex.  ``Covering'' means covering space in
the topological sense. Two graphs are isomorphic if they are
isomorphic as $1$--complexes (i.e., homeomorphic by a map which is
bijective on the vertex and edge sets).  We want to show that if $G$
and $G'$ are finite $k$-regular graphs (i.e., all vertices have
valence $k$) then they have a common finite covering.
\begin{proof}[Proof of the $k$--regular case]
  Replace $G$ and $G'$ by oriented ``fat
  graphs''---thicken 
edges to rectangles of length $10$ and width $1$, say, and replace
vertices by regular planar $k$-gons of side length $1$, to which the
\begin{figure}[ht]
    \centering
\includegraphics[width=.3\hsize]{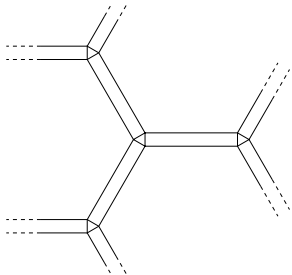}
    \caption{3-regular fat graph} 
    \label{fig:1}
\end{figure}
rectangles are glued at their ends (see Fig.~\ref{fig:1}; the underlying
space of the fat graph is often required to be orientable as a
2-manifold but we don't need this). $G$ and $G'$ both
have universal covering the $k$--regular fat tree $T_k$, whose isometry
group $\Gamma$ acts properly discretely (the orbit space $T_k/\Gamma$
is the $2$--orbifold pictured in Fig.~\ref{fig:2}).  The covering
transformation groups for the coverings $T_k\to G$ and $T_k\to G'$ are
finite index subgroups $\Lambda$ and $\Lambda'$ of $\Gamma$. The
quotient $T_k/(\Lambda\cap \Lambda')$ is the desired common finite
covering of $G$ and $G'$.
\end{proof}
\begin{figure}[ht]
    \centering
\includegraphics[width=.4\hsize]{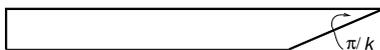}
    \caption{$T_k/\Aut(T_k)$; bold edges are mirror edges.} 
    \label{fig:2}
\end{figure}

We return now to unfattened graphs. In addition to the simplicial view
of graphs, it is helpful to consider in parallel a 
combinatorial point of view, in which an edge of an undirected graph
consists of a pair $(e,\bar e)$ of directed edges.  From this point of
view a graph $G$ is defined by a vertex set $\Vert(G)$ and directed
edge set $\Edge(G)$, an involution $e\mapsto \bar e$ on $\Edge(G)$,
and maps $\tail$ and $\head$ from $\Edge(G)$ to $\Vert(G)$ satisfying
$\tail\bar e=\head e$ for all $e\in \Edge(G)$. One calls $\tail e$ and
$\head e$ the \emph{tail} and \emph{head} of $e$.

The combinatorial point of view is especially convenient for quotients
of graphs by groups of automorphisms: if a group of automorphisms
inverts some edge, the corresponding edge in the quotient graph will
be a directed loop (an edge satisfying $e=\bar e$; in the simplicial
quotient this is a ``half-edge''---an orbifold with
underlying space an interval having a vertex at one end and a mirror
at the other).

A \emph{coloring} of a graph $G$ will mean a graph-homomorphism of $G$
to a fixed \emph{graph of colors}. The vertex and edge sets of this
graph are the \emph{vertex-colors} and \emph{edge-colors}
respectively. By a graph-homomorphism of a colored graph we always
mean one which preserves colors; in particular, covering maps should
preserve colors, and for a colored graph $G$, $\Aut(G)$ will always
mean the group of colored graph automorphisms.

It is an exercise to derive from Leighton's theorem the version for
colored graphs.  But it is also implicit in Leighton's proof, so we
will describe this in Section \ref{sec:leighton}.  This paper was
motivated by the desire in \cite{BN} of a yet more general version,
which we describe in Section \ref{sec:symmetry-restricted}, and prove
in a special case in Section \ref{sec:symmetry-restricted-again},
using the Bass-Kulkarni approach, which we expose in Section
\ref{sec:bk}.  

The \emph{universal covering} $\tilde G$ of a colored graph $G$ is its
universal covering in the topological sense, i.e., of the underlying
undirected graph as a simplicial complex. 
This is a colored tree, with the coloring induced from that of $G$. If
$\Aut(\tilde G)$ does not act transitively on the set of vertices or
edges of $\tilde G$ of each color, we can refine the colors to make it
so, by replacing the graph of colors by the \emph{refined graph of
  colors} $C:=\tilde G/\Aut(\tilde G)$. This does not change
$\Aut(\tilde G)$. We will usually use refined colors, since graphs
which have a common covering have the same universal covering and
therefore have the same refined colors.

\section{Leighton's theorem for colored graphs}
\label{sec:leighton}
We give Leighton's proof, mildly modified to clarify its structure and
to make explicit the fact that it handles colored graphs.
To ease comparison with his version, we have copied some of his notation.
\begin{theorem}\label{th:colored}
  Two finite colored graphs $G$ and $G'$ which have a common covering
  have a common finite covering.
\end{theorem}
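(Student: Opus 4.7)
The plan is to push the $k$-regular fat-graph argument as far as possible and then overcome the essential obstruction by a combinatorial construction. Write $\tilde G$ for the common universal cover of $G$ and $G'$ and $\Gamma = \Aut(\tilde G)$, so $G = \tilde G/\Lambda$, $G' = \tilde G/\Lambda'$ for subgroups $\Lambda, \Lambda' \le \Gamma$. In the $k$-regular case $\Gamma$ acts properly discontinuously, forcing $\Lambda\cap\Lambda'$ to have finite index; here $\Gamma$ may have infinite vertex stabilizers (free permutations of equicolored darts), so one must rigidify.

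First, I would pass to the refined graph of colors $C = \tilde G/\Gamma$, which is common to $G$ and $G'$ since they share $\tilde G$; after this reduction, for each ordered pair $(c,c')$ of refined colors every color-$c$ vertex has a fixed number $n_{cc'}$ of color-$c'$ incident darts, in both $G$ and $G'$. Second, I would choose (arbitrarily) a bijective labeling of the color-$c'$ darts at each color-$c$ vertex by $\{1,\dots,n_{cc'}\}$, independently in $G$ and in $G'$. Third, I would build a finite graph $H$ whose vertices are triples $(v,v',\phi)$ with $v \in \Vert(G)$, $v' \in \Vert(G')$ of the same refined color and $\phi$ a color-preserving bijection of the darts at $v$ with those at $v'$. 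An edge of $H$ at $(v,v',\phi)$ is created for each $e \in \Edge(G)$ with tail $v$: pair $e$ with $e' := \phi(e)$ in $G'$, and carry $\phi$ across $(e,e')$ via the chosen labelings to obtain a new bijection $\phi'$ at $(\head e, \head e')$; the $H$-edge goes to $(\head e, \head e', \phi')$. The graph $H$ is finite, with $|\Vert(H)| \le |\Vert(G)|\,|\Vert(G')|\,\prod n_{cc'}!$, and any connected component should yield the desired finite common cover, with projections $(v,v',\phi)\mapsto v$ and $(v,v',\phi)\mapsto v'$.

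The key obstacle I expect is in the third step: defining the parallel-transport rule $\phi\mapsto\phi'$ so that the resulting maps $H \to G$ and $H \to G'$ are genuinely coverings. A priori $\phi$ only constrains $\phi'$ at the single dart $\bar e$ (via $\phi'(\bar e) = \overline{\phi(e)}$), leaving residual freedom at all other darts of $\head e$. Leighton's device is to use the chosen numerical dart-labelings to remove this freedom globally and coherently, which I expect is the crucial combinatorial insight. Once the rule is set up correctly, finiteness is immediate, the covering property is local and checked color-by-color, and surjectivity on a chosen connected component follows by lifting paths from $G$ (and from $G'$) to $\tilde G$ and then down to $H$.
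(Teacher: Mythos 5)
Your first two steps (pass to refined colors; choose dart-labelings $\psi_{vk}$ at each vertex) coincide with the paper's proof. The third step is where the proposal breaks, and not merely in the way you flagged: the problem is not to find the right transport rule, but that the state space you chose already fails to support the bar involution of a graph. Count the darts of $H$ lying over a fixed $k$-dart $e$ of $G$ with $\partial k=ij$: by your definitions they are indexed by pairs $(v',\phi)$ with $v'$ an $i$-vertex of $G'$ and $\phi\colon\star(\tail e)\to\star(v')$ a color-preserving bijection, so there are $n'_i\prod_{\tail k'=i}r_{k'}!$ of them, where $n'_i$ is the number of $i$-vertices of $G'$. The darts over $\bar e$ number $n'_j\prod_{\tail k'=j}r_{k'}!$. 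These two quantities must be equal (they are interchanged by the bar involution of $H$, equivalently they are both the degree of the covering $H\to G$), but already for $G=G'=K_{2,3}$ with its two refined vertex colors they disagree: $r_k=3$, $r_{\bar k}=2$, $n'_i=2$, $n'_j=3$ give $2\cdot3!=12$ versus $3\cdot2!=6$. No transport rule can repair this; the proposed $H$ is not a graph and no connected component of it can be.

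The missing idea---and it is the core of Leighton's argument---is to compress the extra coordinate from the full set of local bijections down to sets of exactly the right sizes. Let $s$ be a common multiple of the edge-counts $m_k$, and put $a_i=s/n_i$, $b_k=s/m_k$, so that $b_k=b_{\bar k}$. The paper attaches $\alpha\in\Alpha_i$ with $|\Alpha_i|=a_i$ to an $H$-vertex over an $i$-vertex and $\beta\in\Beta_k=\Beta_{\bar k}$ with $|\Beta_k|=b_k$ to an $H$-edge over a $k$-edge; the identity $b_k=b_{\bar k}$ is precisely what your construction lacked. Moreover the labeling set $\Pi_k$ of $k$-darts at a vertex is given a group structure and a bijection $\phi_k\colon\Pi_k\times\Beta_k\to\Alpha_{\tail k}$ is fixed; the tail of $(k,e,e',\beta)$ is then defined using $\phi_k\bigl(\psi_{vk}(e)\psi_{v'k}(e')^{-1},\beta\bigr)$, and local bijectivity of both projections follows because $\phi_k$ is a bijection and $\Pi_k$ is a group. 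So your instinct to transport data across edges is right, but the data must (i) have the same size over $k$ and $\bar k$, which forces the divisibility bookkeeping with $s$, and (ii) be moved by group multiplication in $\Pi_k$, not by matching labels.
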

\begin{proof} 
  We can assume we are working with refined
  colors, so $C:=\widetilde G/\Aut(\widetilde G)$ is our graph of
  colors. We denote the sets of vertex and edge colors by
  $I=\Vert(C)$, $K=\Edge(C)$.  For $k\in K$ we write $\partial k=ij$
  if $\tail k=i$ and $\head k=j$. An \emph{$i$--vertex} is one with
  color $i$ and a \emph{$k$--edge} is one with color $k$.

  Denote by $n_i$ and $m_k$ the numbers of $i$--vertices and $k$-edges
  of $G$. For $k\in K$ with $\partial k=ij$ denote by $r_{k}$ the
  number of $k$--edges from any fixed $i$--vertex $v$ of $G$.
  Clearly $$n_ir_{k}=m_{k}=m_{\bar k}=n_jr_{\bar k}\,.$$ Let $s$ be a
  common multiple of the $m_{k}$'s. Put $a_i:=\frac s{n_i}$ and
  $b_{k}:=\frac{a_i}{r_{k}}=\frac s{m_{k}}$. Then
\begin{equation}
  \label{eq:1}
b_k=\frac {a_i}{r_{k}}=\frac {a_j}{r_{\bar k}}=b_{\bar k}\,.
\end{equation}

The $a_i$ and $b_{k}$ can be defined by equations \eqref{eq:1},
without reference to $G$ (or $G'$). For if positive integers $a_i$
($i\in I$) and $b_{k}$ ($k\in K$) satisfy \eqref{eq:1} whenever
$\partial k=ij$, then $a_i=\frac{r_{k}a_j}{r_{\bar k}}$, so
$n_ia_i=\frac{n_ir_{k}a_j}{r_{\bar k}}=\frac{n_jr_{\bar k}a_j}{r_{\bar
    k}}=n_ja_j$, so $s:=n_ia_i$ is independent of $i$. This $s$ is
divisible by every $m_{k}$ and $a_i=\frac s{n_i}$.

For $i\in I$ choose a set $\Alpha_i$ of size $a_i$. For $k\in K$
choose a group $\Pi_{k}$ of size $r_{k}$,  a set
$\Beta_{k}=\Beta_{\bar k}$ of size $b_{k}$ and a bijection
$\phi_{k}\colon \Pi_{k}\times \Beta_{k}\to \Alpha_{\tail k}$.

For each $i$--vertex $v$ of $G$ choose a bijection $\psi_{vk}$ of the
set of $k$--edges at $v$ to $\Pi_{k}$. Do the same for the graph $G'$.

Define a graph $H$ as follows ($v$ and $v'$ will refer to vertices of
$G$ and $G'$ respectively, and similarly for edges $e$, $e'$):
\begin{align*}
  \label{eq:2}
  \Vert(H)&:=\{(i,v,v',\alpha): v,v'\text{ of color }i,~ \alpha\in \Alpha_i\}\\
\Edge(H)&:=\{(k,e,e',\beta): e, e'\text{ of color }k,~ \beta\in \Beta_{k}\} \\
\tail(k,e,e',\beta)&:=\bigl(\tail k, \tail e,\tail e', 
\phi_{k}(\psi_{vk}(e)\psi_{v'k}(e')^{-1},\beta)\bigr)\\
\overline{(k,e,e',\beta)}&:=(\bar k,\bar e,\bar e',\beta),\quad\text{so}\\
\head(k,e,e',\beta)&=\bigl(\head k, \head e,\head e', 
\phi_{\bar k}(\psi_{v\bar k}(\bar e)\psi_{v'\bar k}(\bar
e')^{-1},\beta)\bigr)\,.
\end{align*}

We claim the obvious map $H\to G$ is a covering. So let $v$ be a
$i$--vertex of $G$ and $e$ a $k$--edge at $v$ and $(i,v,v',\alpha)$ a
vertex of $H$ lying over $v$. We must show there is exactly one edge
of $H$ at this vertex lying over $e$. The edge must have the form
$(k,e,e',\beta)$ with
$\phi_{k}(\psi_{vk}(e)\psi_{v'k}(e')^{-1},\beta)=\alpha$. Since $\phi_{k}$ is
a bijection, this equation determines $\beta$ and
$\psi_{vk}(e)\psi_{v'k}(e')^{-1}$ uniquely, hence also $\psi_{v'k}(e')$, which
determines $e'$. This proves the claim.
By symmetry, $H$ also covers $G'$, so the colored Leighton's theorem
is proved.\end{proof}

\begin{remark}\label{rk:leighton}
  Leighton's original proof is essentially the above proof with $A_i$
  the cyclic group $\Z/a_i$, $B_k$ its cyclic subgroup of order $b_k$,
  and $\Pi_k$ the quotient group $A_i/B_k\cong \Z/r_k$.
\end{remark}

\section{Symmetry-restricted graphs}
\label{sec:symmetry-restricted}

We define a concept of a ``{symmetry-restricted graph}.''  The
underlying data consist of a graph of colors $C$ together with a
collection, indexed by the vertex-colors $i\in I$, of finite
permutation groups $\Delta_i$ with an indexing of the orbits of
$\Delta_i$ by the edge colors $k$ with $\tail k=i$.
\begin{definition}\label{def:vertex restriction}
  A
\emph{symmetry-restricted graph} for these data is a $C$-colored graph
$G$ and for each $i$--vertex $v\in\Vert(G)$ a representation of
$\Delta_i$ as a color-preserving permutation group on the set
$\star(v)$ of edges departing $v$. A \emph{morphism} of
symmetry-restricted graphs $\phi\colon G\to G'$ is a colored graph
homomorphism $\phi$ which restricts to a weakly equivariant
isomorphism from $\star(v)$ to $\star(\phi(v))$ for each $v$. (A map
$\phi\colon X\to Y$ of $\Delta$--sets is \emph{weakly equivariant} if
it is equivariant up to conjugation, i.e., there is a $\gamma\in
\Delta$ such that $\phi(\delta z)=\gamma\delta\gamma^{-1}\phi(x)$ for
each $x\in X$ and $\delta \in\Delta$.) Note that a morphism is a
covering map; if it is bijective it is an \emph{automorphism}.
\end{definition}

An example of a symmetry-restricted graph in this sense is a
$k$-regular oriented fat graph; we have just one vertex color and the
group $\Delta$ is a cyclic group of order $k$ acting transitively on
each $\star(v)$.  Another example is the
following:
\begin{example}\label{ex:dc}
  Consider a ``graph'' $G$ in which each vertex is a small
  dodecahedron or cube, and each corner of a dodecahedron is connected
  by an edge to a corner of a cube and vice versa. The graph of colors
  is a graph $C$ with $\Vert(C)=\{d,c\}$, $\Edge(C)=\{e,\bar e\}$,
  $\tail e=d$, $\head e=c$. The groups $\Delta_d$ and $\Delta_c$
  are the symmetry groups of the dodecahedron and cube respectively,
  acting as permutation groups of the $20$ corners of the dodecahedron
  and the $8$ corners of the cube.  The graph $G$ is thus bipartite,
  with $20$ edges at each $d$--vertex and $8$ edges at each
  $c$--vertex.
\end{example}

The desired application in \cite{BN} of Leighton's theorem for
symmetry restricted graphs often needs a more general concept of
symmetry restriction,
which we describe, although we have no significant results for it.  We
first reformulate the definition of a symmetry restricted graph.

Let $T$ be any infinite tree and $\Gamma$ a subgroup of its full
automorphism group such that $C:=T/\Gamma$ is finite. For a vertex $v$
of $T$, denote by $\Gamma_{(v)}$ the restriction to the star of $v$ of
the vertex group $\Gamma_v$ (the isotropy group of $v$); this is a
finite permutation group on the edges at $v$ which we call the
\emph{restricted vertex group} for the action.  Up to isomorphism as a
permutation group, this group only depends on the image of $v$ in $C$,
so it can be taken as the datum $\Delta_i$ for symmetry restriction,
where $i\in C$ is the color of $v$.

Now assume that $\Gamma$ is maximal among groups that act on $T$ with
quotient $C$ and with prescribed restricted vertex groups. Then
$\Gamma$ is the symmetry restricted automorphism group $\Aut^s(T)$ of
$T$ (the superscript reminds to consider only symmetry restricted
automorphisms).  Any $T/\Gamma_0$, where $\Gamma_0\le
\Gamma=\Aut^s(T)$ is a subgroup which acts freely on $T$, is a
symmetry restricted graph for the given data.

Now, for an edge $e$ of $T$ define the \emph{restricted edge group}
$\Gamma_{(e)}$ to be the restriction to the star of $e$ ($e$ together
with adjacent edges) of the isotropy group $\Gamma_e$. The version
of symmetry restriction needed in \cite{BN} is as follows:
\begin{definition}\label{def:edge restriction}
  Suppose $\Gamma$ is maximal among groups that act on $T$ with
  quotient $C$ and with restricted vertex \emph{and} edge groups equal
  to those of $\Gamma$. The quotients $T/\Gamma_0$, where $\Gamma_0\le
  \Gamma$ acts freely on $T$, are symmetry restricted graphs for the
  data $(T,C,\Gamma)$.
\end{definition}

Note that $\Gamma_{(e)}$ is a subgroup of ${(\Gamma_{(\tau
    e)})}_e\times {(\Gamma_{(\iota e)})}_{\bar e}$, or
$({(\Gamma_{(\tau e)})}_e\times {(\Gamma_{(\iota e)})}_{\bar
  e})\rtimes \Z/2$ if $\Gamma$ inverts $e$.  The ``vertex-only''
definition of symmetry restriction (Definition \ref{def:vertex
  restriction}) is the special case when the restricted edge groups
are as large as possible: ${(\Gamma_{(\tau e)})}_e\times
{(\Gamma_{(\iota e)})}_{\bar e}$ or $({(\Gamma_{(\tau e)})}_e\times
{(\Gamma_{(\iota e)})}_{\bar e})\rtimes \Z/2$.

We'd like to know if Leighton's theorem extends to this setting. More
generally one could ask if Leighton's theorem extends when symmetry is
restricted on possibly larger finite portions of $T$. Unfortunately,
the only case in which we can give any answers is the ``vertex-only''
version (Definition \ref{def:vertex restriction}) of symmetry
restriction.

\begin{theorem}\label{th:symmetry-restricted} For the
  ``vertex-only'' version of symmetry restriction suppose the graph of
  colors is a tree. Then any two finite symmetry-restricted graphs $G$
  and $G'$ which have a common covering have a common finite covering.
\end{theorem}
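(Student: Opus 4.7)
The plan is to adapt the universal-cover argument that settled the $k$-regular case in the introduction, with a ``symmetry-restricted'' automorphism group $\Aut_{sr}(\tilde G)$ playing the role of the isometry group $\Gamma$ of the fat tree. Let $\tilde G$ denote the common universal covering of $G$ and $G'$, equipped with the symmetry-restricted structure pulled back via either covering map (the two pullbacks agree up to weak equivalence at each vertex because both coverings are, by assumption, symmetry-restricted morphisms). Let $\Aut_{sr}(\tilde G)$ be the group of colored automorphisms of $\tilde G$ that restrict to a weakly equivariant bijection on each vertex star, and let $\Lambda,\Lambda'\subset\Aut_{sr}(\tilde G)$ be the deck transformation groups of the covers $\tilde G\to G$ and $\tilde G\to G'$; these act freely with finite quotients.

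The key step is to show that $\Aut_{sr}(\tilde G)$ acts cocompactly on $\tilde G$. After passing to refined colors, it suffices to show that $\Aut_{sr}(\tilde G)$ acts transitively on $i$-vertices for each vertex-color $i$. Given two $i$-vertices $\tilde v_1,\tilde v_2\in\tilde G$, I would build $\phi\in\Aut_{sr}(\tilde G)$ with $\phi(\tilde v_1)=\tilde v_2$ as follows: pick any weakly equivariant bijection $\star(\tilde v_1)\to\star(\tilde v_2)$ (which exists because each star is $\Delta_i$-isomorphic to the ``standard'' $\Delta_i$-set that is part of the data), and then extend $\phi$ outward along the tree $\tilde G$ one vertex at a time. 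At each newly reached vertex $\tilde w$ of some color $j$, the image of the already-processed incoming edge pins down one element, and extending to a weakly equivariant bijection of the whole star $\star(\tilde w)$ is possible because any two same-colored edges have conjugate $\Delta_j$-stabilizers, which lets one choose the required conjugation parameter $\gamma_{\tilde w}\in\Delta_j$ to match the constraint. Since $\tilde G$ is a tree, no consistency issues arise along the extension.

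With cocompactness established, $\Lambda$ and $\Lambda'$ are finite-covolume lattices in $\Aut_{sr}(\tilde G)$; the standard Bass--Kulkarni intersection argument, combined with the symmetry-restricted version of Bass's Conjugation Theorem promised in Section~\ref{sec:bk}, then gives $\tilde G/(\Lambda\cap\Lambda')$ as the desired common finite symmetry-restricted covering of $G$ and $G'$. The main obstacle I anticipate, and the role I expect the tree hypothesis on $C$ to play, is precisely in the final intersection-of-lattices step: when $C$ is a tree the local conjugation parameters $\gamma_{\tilde w}$ built during the extension can be chosen globally coherently, whereas for general $C$ a monodromy around cycles of $C$ may obstruct the commensurability of $\Lambda$ and $\Lambda'$ and prevent $\Lambda\cap\Lambda'$ from having finite covolume; closing this gap in full generality appears to be what keeps the theorem open.
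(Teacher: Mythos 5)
There is a genuine gap, and it is located exactly where the real work of the theorem lies. Your plan is to establish that $\Aut_{sr}(\widetilde G)$ acts transitively on vertices of each color, call $\Lambda,\Lambda'$ the deck groups, and then ``intersect lattices.'' But transitivity only gives cocompactness of the full automorphism group, whose vertex and edge stabilizers are generally infinite; $\Lambda$ and $\Lambda'$ being two discrete cocompact subgroups of such a group does \emph{not} imply $\Lambda\cap\Lambda'$ has finite index in either --- that implication is precisely what Leighton's theorem (or Bass--Kulkarni's commensurability theorem) asserts and is exactly what needs proving. The Conjugation Theorem you invoke (Theorem~\ref{th:conjugacy theorem}) does not create commensurability out of thin air; it presupposes a subgroup $H\subset\Aut(T)$ acting properly discretely with $T/H=C$ (a uniform tree lattice with finite stabilizers), and it then conjugates free actions into $H$. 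The entire content of the proof is to construct such an $H$, and you never do so. As the paper notes at the end of Section~\ref{sec:symmetry-restricted-again}, an $H$ of this kind is necessarily the fundamental group of a graph of \emph{finite} groups on $C$ compatible with the symmetry restriction, and finding one is where the hypothesis on $C$ actually bites.

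The paper's construction is concrete. First it handles the special case in which, for every edge color $k$, the stabilizers $\Delta_k$ and $\Delta_{\bar k}$ of the two ends are isomorphic; then one may take $\Alpha_i=\Delta_i$ and $\Beta_k=\Delta_k$ as the vertex and edge groups of a graph of groups on $C$, and the fat-graph argument of Section~\ref{sec:bk} goes through with the gluings required to be $\Delta_i$-equivariant on each star. This produces the uniform lattice $\Gamma\subset\Aut_{sr}(T)$ that your argument is missing. For the general tree case the paper reduces to the special case by a small but essential enlargement trick: replacing each $\Delta_i$ by $\bar\Delta_i:=\Delta_i\times\prod_{k\in K_i}\Delta_k$, where $K_i$ is the set of edge colors pointing toward $i$, acting on $\star(v)$ through the projection to $\Delta_i$. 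One checks that the new edge stabilizers satisfy $\bar\Delta_k\cong\bar\Delta_{\bar k}$, so the special case applies. Your closing guess that the tree hypothesis is about choosing ``conjugation parameters globally coherently'' is pointing at the right phenomenon --- a monodromy obstruction around cycles of $C$ --- but without the enlargement construction and the graph-of-finite-groups lattice it describes, your proof has no mechanism to resolve it.
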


For Example \ref{ex:dc} one has a simple geometric proof similar to
the fat-graph proof for $k$--regular graphs. Create a
$3$--dimensional ``fat graph'' from $G$ by truncating the corners of
the dodecahedra and cubes to form small triangles and thickening each
edge of $G$ to be a rod with triangular cross-section joining these
triangles. The rods should have a fixed length and thickness, and be
attached rigidly to the truncated polyhedra which play the r\^ole of
vertices. Then the universal covering is a 3-dimensional fat tree
whose isometry group acts properly discretely, so the result follows
as before.

But if we have a graph $G$ made, say, of icosahedra connected to cubes
by edges, then it is less obvious how to create a rigid fat-graph
version, since the vertex degrees of icosahedron and cube are $5$ and
$3$, which do not match.

To prove the above theorem we will need the graph of groups approach
of Bass and Kulkarni.

\section{The Bass-Kulkarni proof}\label{sec:bk}

We give a simplified version of the Bass-Kulkarni proof of Leighton's
theorem in its colored version, Theorem \ref{th:colored}.

We retain the notation of Section \ref{sec:leighton}. In particular,
$C=\tilde G/\Aut(\tilde G)$ is the (refined) graph of colors, with
vertex set $I$ and edge set $K$.  For the moment we assume that
$\Aut(\tilde G)$ acts without inversions, so $C$ has no edge with
$k=\bar k$.

We use this graph as the underlying graph for a graph of groups,
associating a group $\Alpha_i$ of size $a_i$ to each vertex $i$ and a
group $\Beta_k=\Beta_{\bar k}$ of size $b_k$ to each edge $k$, along
with an injection $\phi_k\colon \Beta_k\to \Alpha_{\tail k}$. Of
course we have to choose our groups so that $\Beta_k$ embeds in
$\Alpha_{\tail k}$ for each $k$; one such choice is the one of Remark
\ref{rk:leighton}.

Let $\Gamma$ be the fundamental group of this graph of groups and $T$
the Bass-Serre tree on which $\Gamma$ acts; this action has quotient
$T/\Gamma=C$, vertex stabilizers $\Alpha_i$, and edge stabilizers
$\Beta_k$. Then $T$ is precisely the tree $\tilde G$. Now $\Gamma$
acts properly discretely on $T$. So, if we can express $G$ and $G'$ as
quotients $T/\Lambda$ and $T/\Lambda'$ with $\Lambda$ and $\Lambda'$
in $\Gamma$, then $\Lambda$ and $\Lambda'$ are finite index in
$\Gamma$ so $T/(\Lambda\cap\Lambda')$ is the desired common covering.

To complete the proof we must show that such $\Lambda$ and $\Lambda'$
exist in $\Gamma$. This is the content of Bass's Conjugacy Theorem
(\cite{B}, see also \cite{LT}).  We replace it for now by a ``fat
graph'' argument (but see Theorem \ref{th:conjugacy theorem}).

For each finite group $\Delta$ choose a finite complex $B\Delta$ with
fundamental group $\Delta$ and denote its universal covering by
$E\Delta$. We also assume that any inclusion $\phi\colon \Phi\to
\Delta$ of finite groups which we consider can be realized as the
induced map on fundamental groups of some map $B\phi\colon B\Phi\to
B\Delta$ (this is always possible, for example, if $B\Phi$ is a
presentation complex for a finite presentation of $\Phi$).  We now
create a ``fat graph'' version of our graph of groups by replacing
vertex $i$ by $B\Alpha_i$, edges $k$ and $\bar k$ by $B\Beta_k\times
[0,1]$ (with the parametrization of the interval $[0,1]$ reversed when
associating this to $\bar k$), and gluing each $B\Beta_k\times [0,1]$
to $B\Alpha_{\tail k}$ by the map $B\phi_k\colon
B\Beta_k\times\{0\}\to B\Alpha_{\tail k}$ which realizes the inclusion
$\phi_k\colon \Beta_k\to \Alpha_{\tail k}$.

This is a standard construction which replaces the graph of groups by
a finite complex $K$ whose fundamental group is $\Gamma$.  The
universal covering of $K$ is a fat-graph version $\mathcal T$ of the tree
$T$, obtained by replacing $i$--vertices by copies of $E\Alpha_i$ and
$k$--edges by copies of $E\Beta_k\times [0,1]$.  The ``fat edges''
$E\Beta_k\times [0,1]$ are glued to the ``fat vertices'' $E\Alpha_i$
by the lifts of the maps $B\phi_k$.  An automorphism of $\mathcal T$
will be a homeomorphism which is an isomorphism on each piece
$E\Alpha_i$ and $E\Beta_k\times [0,1]$ (where the only isomorphisms
allowed on an $E\Delta$ are covering transformations for the covering
$E\Delta\to K\Delta$).

We can similarly construct fat versions of the graphs $G$ and $G'$,
replacing each $i$--vertex by a copy of $E\Alpha_i$ and each $k$--edge
by a copy of $E\Beta_k\times [0,1]$.  There is choice in this
construction: if $\tail k=i$ then at the $E\Alpha_i$ corresponding to
an $i$--vertex $v$ there are $r_k$ edge-pieces $E\Beta_k\times [0,1]$
to glue to $E\Alpha_i$ and $r_k$ ``places'' on $E\Alpha_i$ to do the
gluing, and we can choose any bijection between these edge-pieces and
places; moreover, each gluing is then only determined
up to the action of $\Beta_k$.  Nevertheless, however we make these
choices, we have:
\begin{lemma}\label{le:Conjugacy Theorem}
  The above fattened graphs have universal covering isomorphic to
  $\mathcal T$.
\end{lemma}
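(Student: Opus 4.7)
The plan is to construct a natural covering map $\pi$ from the fattened version of $G$ to the finite complex $K$, and then to invoke the general fact that if $\mathcal T$ is the universal cover of $K$ and the fattened $G$ covers $K$, then $\mathcal T$ is also the universal cover of the fattened $G$. The same argument will apply to the fattened $G'$.

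To define $\pi$, I would send each fat vertex $E\Alpha_i$ of the fattened $G$ to $B\Alpha_i\subset K$ by the universal covering projection, and each fat edge $E\Beta_k\times[0,1]$ to $B\Beta_k\times[0,1]\subset K$ via the covering $E\Beta_k\to B\Beta_k$ crossed with the identity on $[0,1]$. These piecewise maps agree on the attachments because, by the construction of the fattened $G$, each of the $r_k$ gluings of an $E\Beta_k\times\{0\}$ to a fat $i$-vertex $E\Alpha_i$ (where $\tail k=i$) is a lift of the single attaching map $B\phi_k\colon B\Beta_k\times\{0\}\to B\Alpha_i$ used to assemble $K$.

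The main step is to verify that $\pi$ is a covering map by examining the local picture. In the interior of a vertex or edge piece the map is the evident finite covering $E\Delta\to B\Delta$. At a gluing interface, a small neighborhood in $K$ of a point of $B\Beta_k\subset B\Alpha_i$ is a disc in $B\Alpha_i$ with a collar $B\Beta_k\times[0,\varepsilon)$ attached, and the preimage of the image of $B\phi_k$ in $E\Alpha_i$ has $[\Alpha_i:\Beta_k]=a_i/b_k=r_k$ components. The construction of the fattened $G$ attaches exactly one edge-piece collar $E\Beta_k\times[0,\varepsilon)$ at each such component, since the corresponding $i$-vertex of $G$ carries exactly $r_k$ $k$-edges. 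So each preimage point has a neighborhood mapping homeomorphically (or as a finite disjoint union of such) onto a neighborhood downstairs, and $\pi$ is a covering.

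Once $\pi$ is in hand, any universal cover of the fattened $G$ is also a connected cover of $K$, hence coincides with $\mathcal T$. The main obstacle I anticipate is precisely this local gluing bookkeeping at the interfaces — matching the $r_k$ lifts of $B\phi_k$ in $E\Alpha_i$ with the $r_k$ edge pieces attached at the corresponding fat $i$-vertex — but the bijection between edges-at-$v$ and places-on-$E\Alpha_i$ chosen in the construction of the fattened graph is exactly what is needed for this matching to be one-to-one.
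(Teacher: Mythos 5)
Your proof is correct, but it takes a genuinely different route from the paper's. The paper builds an isomorphism from $\mathcal T$ to the universal cover of the fattened $G$ directly, by induction over larger and larger finite connected portions of $\mathcal T$: the key observation is that the ambiguity in identifying the next piece (an $E\Alpha_i$ or an $E\Beta_k\times[0,1]$) is precisely a deck transformation in $\Beta_k$, which extends over that whole piece, so the partial isomorphism can always be continued. You instead exhibit the fattened $G$ as a covering space of the finite complex $K$ itself, by piecing together the projections $E\Alpha_i\to B\Alpha_i$ and $E\Beta_k\times[0,1]\to B\Beta_k\times[0,1]$, and then invoke the standard fact that a connected covering space of $K$ has the same universal cover as $K$, namely $\mathcal T$. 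Both arguments hinge on the same underlying point---that the fattened $G$ is glued along lifts of the attaching maps $B\phi_k$, up to $\Beta_k$---but yours packages it more economically: the existence of the quotient map $\pi$ to $K$ absorbs all the choices made in the construction, so there is no induction and no ambiguity to track step by step. The bookkeeping you flag as the crux (matching the $r_k$ ``places'' in $E\Alpha_i$---equivalently the $r_k$ cosets of $\phi_k(\Beta_k)$ in $\Alpha_i$---bijectively with the $r_k$ $k$-edges at the corresponding vertex of $G$) is exactly the bijection the construction chooses, and it is what makes $\pi$ a covering at the gluing interfaces. This is a clean alternative, and arguably more conceptual than the inductive argument.
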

\begin{proof} We construct an isomorphism of $\mathcal T$ to the universal
  covering of the fattened $G$ inductively over larger and larger finite
  portions.  The point is that if one has constructed the isomorphism
  on a finite connected portion of $\mathcal T$, when extending to an adjacent
  piece (either an $E\Alpha_i$ or an $E\Beta_k\times [0,1]$), the
  choice in the gluing map for that piece is an element of a
  $\Beta_k$, which extends over the piece, so the isomorphism can be
  extended over that piece.
\end{proof}

Since the fattened versions of $G$ and $G'$ each have universal
covering $\mathcal T$, they are each given by an action of a subgroup of
$\Gamma=\Aut(\mathcal T)$, as desired, thus completing the proof of the colored
Leighton's theorem for the case that $\Gamma$ has no inversions. 

If $\Gamma$ does invert some edge, so $C$ has an edge $k=\bar k$, then
the edge stabilizer is an extension $\bar\Beta_k$ of the cyclic group
$C_2$ by $\Beta_k$. We can assume that the inclusion
$\Beta_k\subset\bar\Beta_k$ is represented by a double covering
$K\Beta_k\to K\bar\Beta_k$. The complex
$(K\Beta_k\times[0,1])/C_2$ (diagonal
action of $C_2$) is then the object that the ``half-edge'' $k$ of $C$
should be replaced by in fattening $C$.  The proof then goes through
as before.\qed

\smallskip 
Our earlier fat graph proofs for the $k$--regular case and for Example
\ref{ex:dc} are special cases of the proof we have just given if we
generalize the proof to 
allow orbifolds.

\section{Proofs for symmetry-restricted graphs}
\label{sec:symmetry-restricted-again}

\begin{proof}
  [Proof of Theorem \ref{th:symmetry-restricted}]
Recall the situation of Theorem \ref{th:symmetry-restricted}: we have
a graph of colors $C$ defining the set of vertex colors $I=\Vert(C)$,
and for each $i\in I$ we have a finite permutation group $\Delta_i$
which acts as a permutation group of $\star(v)$ for each $i$--vertex
$v$ of our colored graphs. Theorem \ref{th:symmetry-restricted} also
required the graph of colors $C$ to be a tree; for the moment we will
not assume this.

Consider an edge-color $k\in K=\Edge(C)$
with $\partial k=ij$. For a $k$--edge $e$ of $G$ the stabilizer $e$ in
the $\Delta_i$ action on $\star(\tail e)$ will be denoted $\Delta_k$;
it is a subgroup of $\Delta_i$ which is determined up to
conjugation, so we make a choice.

Suppose that for every $k$ we have $\Delta_k\cong \Delta_{\bar
  k}$. This is the case, for example, for the dodecahedron-cube graphs
of Example \ref{ex:dc}, where these stabilizer groups are dihedral of
order $6$.  In this case the proof of the previous section works with
essentially no change, using $\Alpha_i=\Delta_i$ and
$\Beta_k=\Delta_k$.  The only change is that when fattening the $r_k$
$k$--edges at a fattened $i$--vertex $v$, our freedom of choice in
attaching the $r_k$ edge-pieces $E\Beta_k\times [0,1]$ to $r_k$ places
on $E\Alpha_i$ is now restricted: we must attach them equivariantly
with respect to the action of $\Alpha_i=\Delta_i$ on $\star(v)$ (this
still leaves some choice). This proves \ref{th:symmetry-restricted}
for this case.

Now assume $C$ is a tree. We can reduce the general case to the above
special case as follows: For any vertex color $i$ define $K_i:=\{k\in
K: k$ points towards $i\}$ and then replace each $\Delta_i$ by
$\bar\Delta_i:=\Delta_i\times\prod_{k\in K_i}\Delta_k$, acting on
$\star(i)$ via the projection to $\Delta_i$. Then the stabilizer $\bar
\Delta_k$ is $\Delta_k\times\prod_{k\in K_{\tail k}}\Delta_k$, which
equals $\bar\Delta_{\bar k}$, so we are in the situation of the
previous case.\end{proof}

It is not hard to extend the above proof to prove the following
theorem, which we leave to the reader.
\begin{theorem}\label{th:symmetry-restricted2}
  Suppose that for every closed directed path $(k_1,k_2,\dots,k_r)$ in
  the graph of colors we have
  $\prod_{i=1}^r\Delta_{k_i}\cong\prod_{i=1}^r\Delta_{\bar k_i}$ (note
  that these groups have the same order). Then any two finite
  symmetry-restricted graphs with these data which have a common covering
  have a common finite covering.\qed
\end{theorem}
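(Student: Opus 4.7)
The plan is to reduce to the case handled at the beginning of Section~\ref{sec:symmetry-restricted-again}, namely the case $\Delta_k\cong\Delta_{\bar k}$: if we can enlarge each $\Delta_i$ to a group $\bar\Delta_i$, acting on each $\star(v)$ with the same orbit structure as before, so that the enlarged edge stabilizers satisfy $\bar\Delta_k\cong\bar\Delta_{\bar k}$ for every edge-color $k$, then the Bass-Kulkarni fat-graph argument recalled there applies verbatim to our symmetry-restricted graphs $G$ and $G'$ with the enlarged data.

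To produce the $\bar\Delta_i$, I would fix a spanning tree $T$ of the graph of colors $C$ and mimic the construction used in the tree case above: define
\[
\bar\Delta_i:=\Delta_i\times\prod_{k'\in K_i^T}\Delta_{k'},
\]
where $K_i^T$ is the set of \emph{tree} edges $k'\in\Edge(T)$ that point towards $i$ in $T$ (removing $k'$ separates $T$, and $i$ lies in the component containing $\head k'$), with $\bar\Delta_i$ acting on $\star(v)$ through the projection to $\Delta_i$. For any edge-color $k$ with $\partial k=ij$, the stabilizer of a $k$-edge at an $i$-vertex is then $\bar\Delta_k=\Delta_k\times\prod_{k'\in K_i^T}\Delta_{k'}$.

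The key check is that $\bar\Delta_k\cong\bar\Delta_{\bar k}$ for every edge-color $k$. When $k$ is itself a tree edge, $K_j^T=(K_i^T\setminus\{\bar k\})\cup\{k\}$ and the two stabilizers are literally equal, exactly as in the tree case. When $k$ is a non-tree edge, let $(l_1,\ldots,l_s)$ be the unique directed tree path from $i$ to $j$. A short bookkeeping argument—each $l_m$ lies in $K_j^T\setminus K_i^T$, each $\bar l_m$ in $K_i^T\setminus K_j^T$, and every other tree edge contributes equally—yields $K_j^T=(K_i^T\setminus\{\bar l_1,\ldots,\bar l_s\})\cup\{l_1,\ldots,l_s\}$, so after cancelling the common factor the required isomorphism reduces to
\[
\Delta_k\times\prod_{m=1}^s\Delta_{\bar l_m}\cong\Delta_{\bar k}\times\prod_{m=1}^s\Delta_{l_m},
\]
which is precisely the hypothesis of the theorem applied to the closed directed path $(k,\bar l_s,\bar l_{s-1},\ldots,\bar l_1)$ based at $i$.

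With these isomorphisms in hand, I would fix, for each pair $\{k,\bar k\}$, a common abstract edge group together with embeddings into $\bar\Delta_{\tail k}$ and $\bar\Delta_{\head k}$ realizing $\bar\Delta_k\cong\bar\Delta_{\bar k}$, and then run the Bass-Kulkarni fat-graph construction of Section~\ref{sec:bk} on this enlarged graph of groups, attaching the fat edges at each fat vertex equivariantly with respect to the $\bar\Delta_i$-action on $\star(v)$. The step I expect to be the only real obstacle is the symmetric-difference computation for non-tree edges above; the remaining items—checking that $G$ and $G'$ remain symmetry-restricted for the enlarged data (automatic, since the new factors act trivially on $\star(v)$) and that Lemma~\ref{le:Conjugacy Theorem} still applies—are routine.
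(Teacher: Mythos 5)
Your proof is correct and is precisely the extension the paper leaves to the reader: fix a spanning tree of $C$, enlarge each $\Delta_i$ to $\bar\Delta_i=\Delta_i\times\prod_{k'\in K_i^T}\Delta_{k'}$ acting through the projection, and observe that for a non-tree edge $k$ the symmetric-difference identity $K_j^T=(K_i^T\setminus\{\bar l_1,\dots,\bar l_s\})\cup\{l_1,\dots,l_s\}$ reduces $\bar\Delta_k\cong\bar\Delta_{\bar k}$ to the hypothesis applied to the closed path $(k,\bar l_s,\dots,\bar l_1)$, after which the Bass-Kulkarni fattening goes through as in the case $\Delta_k\cong\Delta_{\bar k}$. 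The paper states the theorem with no written proof, so there is nothing to contrast against, but your argument is the natural generalization of the tree case in Section~\ref{sec:symmetry-restricted-again} and I find no gaps.
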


One of the two ingredients of the original Bass-Kulkarni proof of
Leighton's theorem is Bass's Conjugacy Theorem. This theorem holds for
symmetry-restricted graphs (see below), but this appears not
to help extend the above results. The other ingredient in the
Bass-Kulkarni proof is to find a subgroup of $\Aut(T)$ which acts
properly discretely on $T$ with quotient $C$. Such a group would
necessarily be given by a graph of finite groups with underlying graph
$C$, and we are back in the situation of the proof we have already
given, which appears to need strong conditions on $C$. 

The Conjugacy Theorem says, in our language, that if $T$ is a colored
tree whose colored automorphism group acts without inversions (i.e.,
the graph of colors $C=T/\Aut(T)$ has no loops $k=\bar k$), and
$H\subset \Aut(T)$ is a subgroup with $T/H=C$, then any $\Gamma$ which
acts freely on $T$ can be conjugated into $H$ by an element of
$\Aut(T)$.  In the symmetry restricted setting we  write
$\Aut^s(T)$ to remind that we mean symmetry restricted automorphisms.
\begin{theorem}[Conjugacy Theorem]\label{th:conjugacy theorem}
  Fix data for symmetry-restricted graphs (``vertex-only'' version),
  and assume the graph of colors $C$ has no loops. If\/ $T$ is the
  symmetry-restricted tree for this data (it is unique) and $H\subset
  \Aut^s(T)$ a subgroup with $T/H=C$, then for any $\Gamma\subset
  \Aut^s(T)$ which acts freely on $T$, there exists $g\in \Aut^s(T)$
  with $g\Gamma g^{-1}\subset H$.
\end{theorem}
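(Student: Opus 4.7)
The plan is to adapt the fat-graph argument of Section \ref{sec:bk} (in particular Lemma \ref{le:Conjugacy Theorem}) to the symmetry-restricted setting. Since $C$ has no loops, $H$ acts on $T$ without inversions and we avoid any orbifold issues. The strategy has three parts: build a ``universal fat tree'' $\mathcal T$ adapted to the symmetry-restricted data; realize $H$ as the fundamental group of a finite quotient complex $K_H$ whose universal cover is $\mathcal T$; and fatten $G = T/\Gamma$ to a complex $\mathcal G$ that both covers $K_H$ and has universal cover $\mathcal T$. The desired conjugation then drops out of uniqueness of deck groups.

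For the first two parts, proceed as in Section \ref{sec:bk}: for each vertex-color $i$ choose a finite classifying space $B\Delta_i$ with universal cover $E\Delta_i$, and for each edge-color $k$ choose $B\Delta_k$ together with a map $B\Delta_k \to B\Delta_i$ realizing the edge-stabilizer inclusion $\Delta_k \hookrightarrow \Delta_i$ coming from the action of $\Delta_i$ on $\star(v)$. The fat tree $\mathcal T$ is obtained by replacing $i$-vertices of $T$ by $E\Delta_i$ and $k$-edges by $E\Delta_k \times [0,1]$, glued by lifts of $B\Delta_k \to B\Delta_i$. Since $T/H = C$ and $H$ acts without inversions, Bass-Serre theory equips $C$ with a graph-of-groups structure whose vertex groups $H_i \subset \Delta_i$ and edge groups $H_k \subset \Delta_k$ are the $H$-stabilizers; assemble the corresponding finite complex $K_H$ from $BH_i$'s and $BH_k \times [0,1]$'s, so that $\pi_1(K_H) = H$ and the universal cover of $K_H$ is naturally identified with $\mathcal T$.

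Now build a fat version $\mathcal G$ of $G$ using the same pieces $E\Delta_i$ and $E\Delta_k \times [0,1]$, with gluings chosen so that the obvious map $\mathcal G \to K_H$ is a covering of complexes; this is possible because $G \to C$ is already a covering of symmetry-restricted graphs, so the local combinatorial data match. The analogue of Lemma \ref{le:Conjugacy Theorem} shows the universal cover of $\mathcal G$ is isomorphic to $\mathcal T$, so $\pi_1(\mathcal G)$ injects into $\pi_1(K_H) = H$. On the other hand, $\Gamma$ acts freely on $T$ and hence lifts to a free action on $\mathcal T$ whose quotient is identified with $\mathcal G$; thus both $\Gamma$ and (the image of) $\pi_1(\mathcal G)$ are realized as deck groups of the universal covering $\mathcal T \to \mathcal G$, forcing them to be conjugate by some $g \in \Aut(\mathcal T) = \Aut(T)$. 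This gives $g\Gamma g^{-1} \subset H$, as required.

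The main obstacle is constructing $\mathcal G$ as a genuine covering of $K_H$ in the symmetry-restricted setting, where the gluings of edge-pieces at each fattened vertex must respect the $\Delta_i$-action on $\star(v)$. Weak equivariance, as opposed to strict equivariance, provides exactly the flexibility needed: any two compatible bijections between the $r_k$ edge-pieces and the $r_k$ attaching places at a fattened $i$-vertex differ by an element of the edge-stabilizer $\Delta_k$ together with a $\Delta_i$-permutation, and these can be absorbed into the deck automorphisms of the relevant $E\Delta$ pieces. Once this compatibility is verified and the identification $\Aut(\mathcal T) = \Aut(T)$ is checked (ensuring the conjugating $g$ lies in $\Aut(T)$ rather than in some larger symmetry group of $\mathcal T$), the argument is purely formal, mirroring Section \ref{sec:bk}.
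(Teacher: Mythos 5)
Your approach is genuinely different from the paper's, and unfortunately it contains gaps that are not just cosmetic. The paper does not re-run the fat-graph machinery at all: it cites the referee's short proof of the Conjugacy Theorem in \cite{B}, which produces an explicit $g$ in the full colored automorphism group with $g\Gamma g^{-1}\subset H$, and then simply \emph{verifies} that this particular $g$ is symmetry-restricted. The verification is local: $g$ is the identity on the stars of a set $S$ of orbit representatives, and for any other vertex $v$, writing $\gamma v\in S$ with $\gamma\in\Gamma$ and $h=g\gamma g^{-1}\in H$, one computes $g|_{\star(v)}=h^{-1}\gamma|_{\star(v)}$, which is weakly equivariant because $h$ and $\gamma$ are. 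That is the entire content of the paper's proof.

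Your fat-graph route runs into problems that the paper's route sidesteps. First, you assert $\Aut(\mathcal T)=\Aut(T)$, but this is exactly what would need to be proved, and it is doubtful: $\Aut(\mathcal T)$ is built from deck transformations of the pieces $E\Delta_i\to B\Delta_i$, so the vertex stabilizers in $\Aut(\mathcal T)$ are (images of) $\Delta_i$, whereas the vertex stabilizer of $v$ in the symmetry-restricted $\Aut(T)$ consists of all weakly equivariant self-maps of $\star(v)$ (together with compatibility deeper in the tree), which is generally strictly larger than $\Delta_i$ (it contains the $\Delta_i$-equivariant permutations of $\star(v)$ as well). Second, the claim that the $H$-vertex stabilizer $H_v$ is a subgroup of $\Delta_i$, so that $K_H$ can be built from $BH_i\subset B\Delta_i$ pieces with the same universal cover $\mathcal T$, is not justified: $H_v$ acts on $\star(v)$ weakly equivariantly, and that action need not land inside $\Delta_i$. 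Even if it did, $K_H$ built from $BH_i$'s has universal cover built from $EH_i$'s, not from $E\Delta_i$'s, so it would not equal your $\mathcal T$ unless $H_i=\Delta_i$. Third, the assertion that $\Gamma$ ``lifts to a free action on $\mathcal T$ whose quotient is identified with $\mathcal G$'' is the entire difficulty in disguise: $\Gamma$ acts on $T$, but whether that action extends to $\mathcal T$ (a fattening built around $H$'s local data), and whether the quotient matches the $\mathcal G$ you constructed as a cover of $K_H$, are precisely the claims that would need proof, not the claims that follow formally. You flag ``the main obstacle'' yourself but do not resolve it. The paper avoids all of this by never fattening: it works entirely inside $\Aut(T)$ and reduces the problem to a one-line star computation.
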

\begin{proof} In \cite{B} Bass includes a short proof of his Conjugacy
  Theorem proposed by the referee. That proof constructs the
  conjugating element $g$ directly, and one verifies by inspection
  that $g$ is a symmetry restricted automorphism. The point is that
  $g$ is the identity on the stars of a representative set $S$ of
  vertices for orbits of the $\Gamma$ action. If $v$ is any vertex of
  $T$ let $\gamma\in \Gamma$ be the element that takes $v$ to a vertex
  in $S$ and $h=g\gamma g^{-1}\in H$. Restricted to the star of $v$
  the map $g$ is $h^{-1}\gamma$, which is in $\Aut^s(T)$.
\end{proof}
It is worth noting that the ``fat graph argument'' in Section
\ref{sec:bk} proves Bass's Conjugacy Theorem in its original form (see
also \cite{LT}). However, neither that proof nor Bass's proof can be
applied to the symmetry restricted case, so it is somewhat remarkable
that the above proof works.

The above approach to extend Leighton's theorem involved extending
each vertex group to make it act non-effectively on the star of the
vertex; we used trivial extensions (direct products). By using other
extensions we can prove isolated additional cases, but this approach
is immediately obstructed if $C$ has a closed directed path
$\{k_1,\dots,k_n\}$ for which
$\sum_{i=1}^r[\Delta_{k_i}]\ne\sum_{i=1}^r[\Delta_{\bar k_i}]$, where
$[\Delta]$ means the class of $\Delta$ in the Grothendieck group of
finite groups modulo the relations given by short exact sequences (so
a group is equivalent to the sum of its composition factors).

In the application to \cite{BN}, the groups $\Delta_k$ and
$\Delta_{\bar k}$ are both extensions of a finite cyclic group
$F_k=F_{\bar k}$ of order $1$, $2$, $3$, $4$ or $6$ by a finite
2-generator abelian group, so the above the obstruction does not
arise. Nevertheless, we have been unable to resolve the question in
general for this case, even if the $F_k$ are trivial. And when an
$F_k$ is non-trivial we have a corresponding edge restriction
(Definition \ref{def:edge restriction}),  namely the subgroup of
$(a,b)\in\Delta_k\times\Delta_{\bar k}$ for which $a$ and $b$ have the
same image in $F_k$, so we are outside the cases of symmetry
restriction where we have any results.


\begin{thebibliography}{99}
\bibitem{reg} Dana Angluin and A. Gardner, Finite common coverings of pairs of regular graphs,
J. Combin. Theory Ser. B {\bf30} (1981),  184--187. 
\bibitem{BK}Hyman Bass and Ravi Kulkarni,
Uniform tree lattices,
J. Amer. Math. Soc. {\bf3} (1990), 843--902. 
\bibitem{B}Hyman Bass, Covering theory for graphs of groups,
J. Pure Appl. Algebra {\bf89} (1993), 3--47. 
\bibitem{BN} Jason Behrstock and Walter D Neumann, Quasi-isometry
  classification of non-geometric 3-manifold groups.
\bibitem{leighton}Frank Thomson Leighton, Finite Common Coverings of
  Graphs, J. Comb. Theory, Series B {\bf 33} (1982), 231--138.
\bibitem{LT} Seonhee Lim, Anne Thomas, Covering theory for complexes
  of groups, J. Pure Appl. Alg. {\bf212} (2008), 1632--1663.
\end{thebibliography}
\end{document}